\numberwithin{equation}{section}
\newtheorem{thm}{Theorem}
\newtheorem{lemma}[thm]{Lemma}
\newtheorem{prop}[thm]{Proposition}
\newtheorem{cor}[thm]{Corollary}
{\theorembodyfont{\rmfamily}

}
\newcommand{\qed}{\hfill \mbox{\raggedright \rule{.07in}{.1in}}}
\newenvironment{proof}{\vspace{1ex}\noindent{\bf
Proof}\hspace{0.5em}}{\hfill\qed\vspace{1ex}}
\newcommand{\R}{{\mathbb R}}
\newcommand{\Z}{{\mathbb Z}}
\newcommand{\N}{{\mathbb N}}
\newcommand{\PP}{{\mathbb P}}
\newcommand{\cC}{{\mathcal C}}
\newcommand{\cJ}{{\mathcal J}}
\newcommand{\cM}{{\mathcal M}}
\newcommand{\eps}{\epsilon}
\newcommand{\sgn}{\operatorname{sgn}}
\newcommand{\SMALL}{\textstyle}
\newcommand{\E}{{\bf E}}
\title{Necessary and sufficient condition for $\cM_2$-convergence to a L\'evy process 
\\ for billiards with cusps at flat points}
\author{
Paul Jung    \thanks{Department of Mathematical Sciences, KAIST, Daejeon, South Korea, supported in part by NRF grant N01170220.}
\and
 Ian Melbourne    \thanks{Mathematics Institute, University of Warwick, Coventry, CV4 7AL, UK}
\and
Fran\c{c}oise P\`ene    \thanks{Universit\'e de Brest, Institut Universitaire de France, LMBA, UMR CNRS 6205, 29238 Brest cedex, France}
 \and 
Paulo Varandas\thanks{Departamento de Matem\'atica, Universidade Federal da Bahia,
40170-110 Salvador, Brazil}
\and
Hong-Kun Zhang\thanks{Department of Mathematics, University of Massachusetts, Amherst, MA, USA}
}
\date{27 October 2019}
\begin{document}

 \maketitle

\begin{abstract}
We consider a class of planar dispersing billiards with a cusp at a point of vanishing curvature.  
Convergence to a stable law and to the corresponding L\'evy process in the $\cM_1$ and $\cM_2$ Skorohod topologies has been studied in recent work.  Here we show that certain sufficient conditions for $\cM_2$-convergence are also necessary.
\end{abstract}

 \section{Introduction} 
 \label{sec-intro}

B\'alint, Chernov \& Dolgopyat~\cite{BalintChernovDolgopyat11} proved anomalous diffusion for planar dispersing billiards with cusps, namely that the central limit theorem holds with nonstandard normalization $(n\log n)^{1/2}$ instead of 
$n^{1/2}$.  They also established the functional version, proving weak convergence to Brownian motion (again with the anomalous normalization $(n\log n)^{1/2}$).
Their results hold for all H\"older observables with zero mean, and
throughout we restrict attention to such observables.

Recently, Zhang~\cite{Zhang17a} introduced a class of billiards with cusps where the boundary has vanishing curvature at a cusp.
For definiteness, we suppose throughout that there is a single symmetric cusp; more general examples are considered in~\cite{JungPeneZhangsub}.
Jung \& Zhang~\cite{JungZhang18} proved convergence to
an $\alpha$-stable law (with normalization $n^{1/\alpha}$) for such billiards; any $\alpha\in(1,2)$ can be achieved depending on the flatness at the cusp.  Then in two papers~\cite{JungPeneZhangsub,MVsub} written independently and using different methods, we  obtained functional versions, yielding weak convergence to the corresponding $\alpha$-stable L\'evy process.

For convergence to a L\'evy process, there is the question as to which topology to use on the Skorohod space of c\`adl\`ag paths.
For H\"older (and hence bounded) observables, it is easy to see that convergence in the standard $\cJ_1$ Skorohod topology~\cite{Skorohod56} fails.
Convergence in the $\cM_1$ Skorohod topology is proved
in~\cite{JungPeneZhangsub} for observables that have constant sign near the cusp.  In~\cite{MVsub}, necessary and 
sufficient conditions for $\cM_1$-convergence, and 
sufficient conditions for convergence in the $\cM_2$ topology, are given.
In particular, by~\cite{MVsub} there are observables for which convergence fails in
$\cM_1$ but holds in the weaker $\cM_2$ topology.
Moreover, it is conjectured in~\cite{MVsub} that the 
sufficient conditions for $\cM_2$-convergence are necessary
and hence that there are observables for which convergence fails in all of the Skorohod topologies~\cite{Skorohod56}.
(For more details about the various Skorohod topologies, we refer to~\cite{JungPeneZhangsub,MVsub,MZ15,Skorohod56,Whitt}.)

In this paper, we prove the conjecture in~\cite{MVsub}:
the conditions therein for $\cM_2$-convergence are indeed necessary and sufficient.  To prove this we need extra information from the proof in~\cite{JungPeneZhangsub}.  There it is shown that a certain first return process, denoted $U_n$ in Lemma~\ref{lem:U} below, converges in $\cJ_1$ (the setup in~\cite{MVsub} yields only $\cM_1$-convergence for $U_n$ which seems insufficient for proving necessity of conditions for $\cM_2$-convergence for the full process).

In the remainder of the introduction, we describe the example in
\cite{JungZhang18} and state our main result.
For the sake of simplicity, we concentrate on the setting of a single cusp which is the example studied in~\cite{JungZhang18,MVsub}.   As explained in Section~\ref{sec:several}, our main result extends straightforwardly to more general billiard tables with finitely many cusps with vanishing curvature as studied in~\cite{JungPeneZhangsub}. 

The Jung \& Zhang example~\cite{Zhang17a, JungZhang18} is 
a billiard with a table $Q\subset\R^2$ whose boundary consists  of a finite number of $C^3$ curves $\Gamma_i$, $i=1,\dots,n_0$, where $n_0\ge3$, with a cusp formed by two of these curves $\Gamma_1$, $\Gamma_2$.
 The other intersection points correspond to corners.
In coordinates $(s,z)\in\R^2$, the cusp lies at $(0,0)$ and $\Gamma_1$, $\Gamma_2$ are tangent to the $s$-axis at $(0,0)$.  Moreover, in a small neighborhood of $(0,0)$, the curves $\Gamma_1$ and $\Gamma_2$ can be represented as the graph of 
$z= \beta^{-1}s^\beta$
and $z=- \beta^{-1}s^\beta$
 respectively,  where $\beta>2$ is a parameter.

The phase space of the billiard map (or collision map) $T$ is given by 
$\Lambda=\partial Q\times[0,\pi]$, with coordinates $(r,\theta)$
where $r$ denotes arc length along $\partial Q$
 and $\theta$ is the angle between the tangent line of the boundary and the collision vector in the clockwise direction.
There is a natural ergodic invariant probability measure $d\mu=(2|\partial Q|)^{-1}\sin\theta\,dr\,d\theta$ on~$\Lambda$, where $|\partial Q|$ is the length of $\partial Q$.

In configuration space,
the cusp is a single point $(0,0)=\Gamma_1\cap\Gamma_2$.
Let $r'\in\Gamma_1$ and
$r''\in\Gamma_2$ be the arc length coordinates of $(0,0)$.
Then in phase space $\Lambda$, the cusp is
the union of two line segments
\[
\cC=\{(r',\theta):0\le\theta\le\pi\} \cup\{(r'',\theta):0\le\theta\le\pi\}.
\]
Let $\alpha=\frac{\beta}{\beta-1}\in(1,2)$ where $\beta>2$ is the curvature at the flat cusp as described above.
Given $v:\Lambda\to\R$ continuous, define 
\[
I_v(s)=\frac12\int_0^s \{v(r',\theta)+v(r'',\pi-\theta)\} (\sin\theta)^{1/\alpha}\,d\theta,
\quad  s\in[0,\pi].
\]
Now suppose that $v:\Lambda\to\R$ is a H\"older mean zero observable.
By~\cite{JungZhang18}, 
convergence to a totally skewed $\alpha$-stable law holds provided $I_v(\pi)\neq0$.  We restrict from now on to the case
$I_v(\pi)>0$.
(The case $I_v(\pi)<0$ is similar with obvious modifications.)
 Then
$n^{-1/\alpha}\sum_{j=0}^{n-1}v\circ T^j\to_d G$ where
$G$ has characteristic function  
\[
\E(e^{iuG})=\exp\{-|u|^\alpha\sigma^\alpha(1-i\sgn u\tan{\SMALL\frac{\pi\alpha}{2}})\},
\quad \sigma^\alpha=\frac{I_v(\pi)^\alpha\Gamma(1-\alpha)\cos{\SMALL\frac{\pi\alpha}{2}}}{2^{\alpha-1}\beta|\partial Q|}\, .
\]
We remark that the constant 
$\Gamma(1-\alpha)\cos{\SMALL\frac{\pi\alpha}{2}}$
was missing from the statement of the main result in Jung \& Zhang~\cite{JungZhang18}, but this error was corrected in the papers~\cite{JungPeneZhangsub, MVsub}. There is also a difference in the scale parameter between \cite{JungPeneZhangsub} and \cite{MVsub}, by a factor of $2^{-\alpha}$, due to a difference of a factor of $2$ in the definitions of $I_v(s)$ in those two papers. We use the version of $I_v(s)$ in~\cite{MVsub}.

The references~\cite{JungPeneZhangsub,MVsub} study the corresponding functional limit law $W_n\to_w W$ in $D([0,\infty),\cM_1)$ where
$$W_n(t)=n^{-1/\alpha}\sum_{j=0}^{[nt]-1}v\circ T^j,$$
and $W$ is the $\alpha$-stable L\'evy process with $W(1)=_dG$.
In particular, by~\cite[Theorem~1.3]{MVsub} a necessary and sufficient condition for convergence in $\cM_1$
is that $s\mapsto I_v(s)$ is monotone.
When convergence in $\cM_1$ breaks down,~\cite[Theorem~1.4]{MVsub} gives a sufficient condition for convergence in $\cM_2$, namely that $I_v(s)\in[0,I_v(\pi)]$ for all $s\in[0,\pi]$.  Our main result is that this condition is also necessary:

\begin{thm} \label{thm:M2iff}
Let $v:\Lambda\to\R$ be a H\"older mean zero observable with $I_v(\pi)>0$.  Then $W_n\to_w W$ in $(D[0,\infty),\cM_2)$ as $n\to\infty$ if and only if
$I_v(s)\in[0,I_v(\pi)]$ for all $s\in[0,\pi]$.
\end{thm}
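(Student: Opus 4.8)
The sufficiency direction is precisely \cite[Theorem~1.4]{MVsub}, so it remains to establish necessity, and I would do this by contraposition: assuming $I_v(s_0)\notin[0,I_v(\pi)]$ for some $s_0\in(0,\pi)$, I aim to show that $W_n$ cannot converge to $W$ in $\cM_2$. There are two subcases, $I_v(s_0)>I_v(\pi)$ and $I_v(s_0)<0$, treated by the same argument (an overshoot above, respectively an undershoot below, a macroscopic jump); I describe the first and set $\eta=I_v(s_0)-I_v(\pi)>0$. The strategy is to produce, with probability bounded away from zero, a point on the graph of $W_n$ that sits a fixed height above the top of a macroscopic jump of $W$, and to rule this out using the completed-graph description of the $\cM_2$ topology. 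Recall that $\cM_2$-convergence $x_n\to x$ is equivalent to Hausdorff convergence on compact time intervals of the completed graphs $\Gamma(x_n)\to\Gamma(x)$, where $\Gamma(x)$ is the union of the graph of $x$ with the vertical segment joining $(t,x(t^-))$ to $(t,x(t))$ at each jump time $t$; see \cite{Whitt}.

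First I would isolate a single macroscopic jump. By the Skorohod representation theorem we may assume $W_n\to W$ almost surely in $\cM_2$ on a common probability space. Fix $T>0$ and $a>0$. With positive probability the limit $W$ has on $[0,T]$ exactly one jump of size $J=W(\tau)-W(\tau^-)\ge a$, at a time $\tau$, and on a small window $[\tau-w,\tau+w]$ the remaining motion (all other jumps together with the drift) oscillates by at most $\epsilon$. On this event $\Gamma(W)$ restricted to the window lies within Hausdorff distance $\epsilon$ of the \emph{step} consisting of the level $W(\tau^-)$ on $[\tau-w,\tau)$, the vertical segment $\{\tau\}\times[W(\tau^-),W(\tau^-)+J]$, and the level $W(\tau^-)+J$ on $(\tau,\tau+w]$; in particular every point of $\Gamma(W)$ in this window has height at most $W(\tau^-)+J+\epsilon$.

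The key analytic input is the internal structure of the corresponding excursion of $W_n$, for which I would invoke the refined description from \cite{JungPeneZhangsub} underlying Lemma~\ref{lem:U}. Because the first-return process $U_n$ converges in the stronger $\cJ_1$ topology, each macroscopic limit jump is carried by a single long cusp excursion of the billiard, localized in time, rather than being smeared across several returns. Within such an excursion, reading the rescaled partial sums $n^{-1/\alpha}\sum v\circ T^j$ as the collision angle sweeps from $0$ to $\pi$, the path traces the profile $s\mapsto (J/I_v(\pi))\,I_v(s)$ up to a vanishing error: the total increment recovers the jump $J$ (corresponding to $I_v(\pi)$), while at the intermediate stage corresponding to $s_0$ the partial sum attains $(J/I_v(\pi))\,I_v(s_0)=J+(J/I_v(\pi))\eta$ above the pre-excursion level. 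Since this is an actual value taken by $W_n$, the point $p_n=\big(\tau_n,\;W_n(\tau_n^-)+J+(J/I_v(\pi))\eta\big)$ lies on the graph of $W_n$, at a time $\tau_n\to\tau$, and hence belongs to $\Gamma(W_n)$.

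The contradiction then follows: the height of $p_n$ exceeds $W(\tau^-)+J$ by $(J/I_v(\pi))\eta\ge (a/I_v(\pi))\eta$, so for $\epsilon$ small its distance to $\Gamma(W)$ is bounded below by $(a/I_v(\pi))\eta-\epsilon>0$, uniformly in $n$, because every point of $\Gamma(W)$ either lies in the window (height at most $W(\tau^-)+J+\epsilon$) or outside it (hence far in time from $\tau_n$). Thus $d_H(\Gamma(W_n),\Gamma(W))$ fails to tend to $0$ on an event of positive probability, contradicting almost-sure $\cM_2$-convergence; the subcase $I_v(s_0)<0$ is identical with an undershoot below $W(\tau^-)$. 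I expect the main obstacle to be the middle step, namely controlling not merely the endpoint but the entire \emph{interior} of the excursion path and identifying its limiting shape with $(J/I_v(\pi))I_v(s)$. This is exactly where the $\cJ_1$-convergence of $U_n$ from \cite{JungPeneZhangsub}, rather than the weaker $\cM_1$-convergence of \cite{MVsub}, is indispensable, since only $\cJ_1$-localization lets a macroscopic limit jump be attributed to a single excursion whose internal profile is then governed by the asymptotics defining $I_v$.
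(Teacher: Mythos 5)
Your overall architecture matches the paper's in its ingredients (sufficiency quoted from \cite[Theorem~1.4]{MVsub}; the intra-excursion profile governed by $I_v$, which is~\cite[Proposition~8.1]{MVsub} and appears in the paper as Proposition~\ref{prop:M2iff}; the $\cJ_1$-convergence of $U_n$ from \cite{JungPeneZhangsub} as in Lemma~\ref{lem:U}), but the central step of your argument has a genuine gap: the pathwise attribution of a macroscopic limit jump to a single excursion of $W_n$ under the Skorohod coupling. The coupling you construct comes from the \emph{assumed} $\cM_2$-convergence of $W_n$, and on that new probability space the process $U_n$ does not exist; the $\cJ_1$-convergence $U_n\to_w W$ is a purely distributional statement about a different functional of the original billiard system, and it gives you no almost-sure control along your coupling. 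To use it as you intend, you would need \emph{joint} convergence of the pair $(U_n,W_n)$ to $(W,W)$ with the \emph{same} limit path in both coordinates, which is neither automatic nor established (it requires, e.g., tightness of the pair plus identification of the two limits via $R_n(t)\to0$ in probability at fixed times, itself a nontrivial length-biasing estimate). Without this, the assumed $\cM_2$-convergence is perfectly consistent with the rise of size $J$ near $\tau$ being split among many excursions of individually small increment: if the jump is carried by $m$ excursions of sizes $J_1,\dots,J_m$, each produces an overshoot only of order $J_k\,\eta/I_v(\pi)$ above the running level, and as the pieces shrink the overshoot of $\Gamma(W_n)$ above $\Gamma(W)$ vanishes --- exactly the smearing that $\cM_2$ permits. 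So the quantity $(J/I_v(\pi))\eta$ in your point $p_n$ is not justified, and the contradiction evaporates at the step you yourself flag as ``the main obstacle.''

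The paper circumvents precisely this difficulty by never locating which excursion carries which limit jump, replacing your pathwise argument with a comparison of probabilities of events. From $U_n\to_w W$ in $\cJ_1$ one gets $\lim_n\mu(U_n\in E(1))=1-e^{-c}$, where $E(b)$ is the event of a jump exceeding $b$; deterministically, via Corollary~\ref{cor:M2iff} and Proposition~\ref{prop:max}, every excursion increment exceeding $1$ forces $W_n$ to rise by at least $I_v^*/I_v(\pi)-\eps$ within a time window of length $n^{-1}\max_{k\le n}\varphi\circ f^k\to0$; and the assumed $\cM_2$-convergence bounds the probability of such a rise over vanishing windows by $1-\exp\{-c(I_v^*/I_v(\pi)-\eps)^{-\alpha}\}+\eps$, since in the $\cM_2$ limit such a rise must come from a jump of $W$ of (almost) that size. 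Comparing $1-e^{-c}\le 1-\exp\{-c(I_v(\pi)/I_v^*)^{\alpha}\}$ yields the contradiction because $I_v^*>I_v(\pi)$. No coupling, Skorohod representation, or joint convergence is needed; only marginal laws and the deterministic overshoot estimate enter. If you wish to salvage your pathwise version, you must first prove the joint convergence $(U_n,W_n)\to_w(W,W)$ in $\cJ_1\times\cM_2$ with identified limits; otherwise the probabilistic counting argument of the paper is the correct repair.
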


\section{Proof of Theorem~\ref{thm:M2iff}}
\label{sec:proof}

As in~\cite{JungZhang18} and~\cite{MVsub} we consider the 
first return map $f=T^\varphi:X\to X$ where $X$ is a region bounded away from the cusp.
Specifically,
let $X=(\Gamma_3\cup\dots\cup\Gamma_{n_0})\times[0,\pi]$ and define the first return time $\varphi:X\to\Z^+$ and first return map $f=T^\varphi:X\to X$,
\[
\varphi(x)=\inf\{n\ge1:T^nx\in X\}, \qquad f(x)=T^{\varphi(x)}x.
\]
Define $\varphi_k=\sum_{j=0}^{k-1}\varphi\circ f^j$ 
and $v_k=\sum_{j=0}^{k-1}v\circ T^j$
for $k\ge0$.
Also, set $\bar\varphi=\frac 1{\mu(X)}\int_X\varphi\,d\mu$.

\begin{prop} \label{prop:max}
$\lim_{n\to\infty} n^{-1}\max_{j\le n}\varphi\circ f^j=0$ almost surely.
\end{prop}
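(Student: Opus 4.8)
The plan is to avoid estimating the running maximum directly and instead reduce the statement to the integrability of the return time $\varphi$ followed by a routine Borel--Cantelli argument. The single analytic input needed is that $\varphi\in L^1$ for the normalized restriction $\mu_X(\cdot)=\mu(\cdot\cap X)/\mu(X)$, which is the $f$-invariant measure on $X$ (invariance being the standard Poincar\'e/Kac property of a first return map); indeed the very finiteness of $\bar\varphi$ presupposes this. Quantitatively, the excursions into the flat cusp give a polynomial tail $\mu_X(\varphi>n)=O(n^{-\alpha})$ for some $\alpha\in(1,2)$, the estimate underlying the $\alpha$-stable limit of Jung \& Zhang~\cite{JungZhang18}, and since $\alpha>1$ this yields $\int_X\varphi\,d\mu_X<\infty$. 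It is precisely the regime $\alpha>1$ that makes the assertion true: a heavy-tailed increment of index $\alpha>1$ has running maximum of order $n^{1/\alpha}=o(n)$.

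Granting integrability, the first step is the pointwise statement $n^{-1}\varphi\circ f^n\to0$ almost surely. Fix $\eps>0$. Since $f$ preserves $\mu_X$ we have $\mu_X(\varphi\circ f^n>\eps n)=\mu_X(\varphi>\eps n)$ for every $n$, and because $t\mapsto\mu_X(\varphi>t)$ is nonincreasing,
\[
\sum_{n\ge1}\mu_X(\varphi\circ f^n>\eps n)=\sum_{n\ge1}\mu_X(\varphi>\eps n)\le\eps^{-1}\int_X\varphi\,d\mu_X<\infty.
\]
By the Borel--Cantelli lemma, for $\mu_X$-a.e.\ point one has $\varphi\circ f^n\le\eps n$ for all sufficiently large $n$. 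Intersecting the corresponding full-measure sets over $\eps=1/k$, $k\in\N$, gives $n^{-1}\varphi\circ f^n\to0$ almost surely. Note that ergodicity is not required: only the $f$-invariance of $\mu_X$ is used.

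The final step is the deterministic passage from the decay of the individual term to the decay of the running maximum. If $(a_j)_{j\ge0}$ is nonnegative with $a_j/j\to0$, then $n^{-1}\max_{j\le n}a_j\to0$: given $\eps>0$ choose $N$ with $a_j\le\eps j$ for $j>N$, so that for $n>N$,
\[
n^{-1}\max_{j\le n}a_j\le\max\Big(n^{-1}\max_{j\le N}a_j,\ \eps\Big),
\]
and the right-hand side tends to $\eps$ as $n\to\infty$; letting $\eps\downarrow0$ gives the claim. Applying this with $a_j=\varphi\circ f^j$ on the full-measure set produced above completes the argument. The only genuine ingredient is thus the tail bound/integrability of $\varphi$, and everything downstream is soft; the ``hard part'' is therefore not really an obstacle but the quoted excursion estimate, which if desired can be recovered from the geometry of a single passage into the flat cusp but is cleanest to cite from the existing analysis.
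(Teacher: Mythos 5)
Your proof is correct, but it reaches the key pointwise statement by a genuinely different route from the paper's. The paper derives $n^{-1}\varphi\circ f^n\to 0$ a.e.\ from the Birkhoff ergodic theorem: since $\varphi$ is integrable, $n^{-1}\varphi_n\to\bar\varphi$ a.e., and telescoping $\varphi\circ f^n=\varphi_{n+1}-\varphi_n$ makes the limit vanish; the passage from the single term to the running maximum is then exactly the elementary deterministic step you spell out (the paper compresses it to ``the result follows easily''). You instead obtain the pointwise statement from the first Borel--Cantelli lemma, using only the $f$-invariance of $\mu_X$ (Kac/Poincar\'e) and the chain
\[
\sum_{n\ge1}\mu_X(\varphi\circ f^n>\eps n)=\sum_{n\ge1}\mu_X(\varphi>\eps n)\le\eps^{-1}\int_X\varphi\,d\mu_X<\infty,
\]
which is valid since the tail function is nonincreasing, and then intersect over $\eps=1/k$. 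Both arguments are equally soft and rest on the single input $\varphi\in L^1(\mu_X)$; yours buys a slightly weaker hypothesis, as you note (no ergodicity, only invariance), though in fairness the paper's telescoping argument also survives without ergodicity, since Birkhoff would still give a.e.\ convergence of $n^{-1}\varphi_n$ to some finite invariant limit, which is all the subtraction needs. Conversely the paper's version is shorter in context because $n^{-1}\varphi_n\to\bar\varphi$ is reused elsewhere. One small trim: your appeal to the polynomial tail $\mu_X(\varphi>n)=O(n^{-\alpha})$ from~\cite{JungZhang18} is unnecessary ballast, since integrability follows directly from Kac's lemma ($\int_X\varphi\,d\mu_X=1/\mu(X)$ by ergodicity of $T$) with no tail estimate --- and you correctly flag that only integrability actually enters your argument.
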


\begin{proof}
Since $\varphi$ is integrable, it follows from the ergodic theorem that $\lim_{n\to\infty}n^{-1}\varphi_n=\bar\varphi$ a.e.\ and so
$\lim_{n\to\infty}n^{-1}\varphi\circ f^n=0$ a.e.
The result follows easily.
\end{proof}

Turning to the proof of Theorem~\ref{thm:M2iff},
we continue to assume that $v:\Lambda\to\R$ is a H\"older mean zero observable with $I_v(\pi)>0$.
By~\cite{MVsub}, it suffices to consider the case $I_v(s)\not\in[0,I_v(\pi)]$ for some $s$.
From now on, we suppose that 
\[
\max_{s\in[0,\pi]}I_v(s)>I_v(\pi).
\]
(The case 
$\min_{s\in[0,\pi]}I_v(s)<0$,
is treated similarly.)

Let $I_v^*=\max_{s\in[0,\pi]}I_v(s)$ and choose $s^*$ such that
$I_v^*=I_v(s^*)$.
Define
\[
\ell^*:X\to\N, \qquad \ell^*(x)=[\varphi(x)\Psi(s^*)],
\]
where $\Psi:[0,\pi]\to[0,1]$ is the diffeomorphism
$\Psi(s)=I_1(\pi)^{-1}I_1(s)$.

\begin{prop} \label{prop:M2iff}
There exists $C,\delta>0$ such that
\[
\Big|\frac{W_n((\varphi_k+\ell^*\circ f^k)/n)-W_n(\varphi_k/n)}{
W_n(\varphi_{k+1}/n)-W_n(\varphi_k/n)}-
\frac{I_v^*}{I_v(\pi)}\Big|
\le C\varphi^{-\delta}\circ f^k,
\]
for all $k\ge0$, $n\ge1$.
\end{prop}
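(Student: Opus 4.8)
The plan is to first eliminate $n$, reducing the statement to a pointwise estimate on $X$, and then to insert a uniform within-excursion asymptotic for Birkhoff sums of $v$ extracted from~\cite{JungPeneZhangsub}. Since $\varphi_{k+1}-\varphi_k=\varphi\circ f^k$ and $T^{\varphi_k}=f^k$ gives $v\circ T^{\varphi_k+i}=(v\circ T^i)\circ f^k$, the prefactor $n^{-1/\alpha}$ in $W_n$ cancels between numerator and denominator, so the ratio is independent of $n$. For $\mu$-a.e.\ base point, writing $y=f^kx$ and recalling $\ell^*(y)=[\varphi(y)\Psi(s^*)]$, the ratio equals
\[
R(y)=\frac{\sum_{i=0}^{\ell^*(y)-1}v(T^iy)}{\sum_{i=0}^{\varphi(y)-1}v(T^iy)}.
\]
It therefore suffices to produce $C,\delta>0$ with $\bigl|R(y)-I_v^*/I_v(\pi)\bigr|\le C\varphi(y)^{-\delta}$ for all $y\in X$; taking $y=f^kx$ then yields the proposition.

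The heart of the matter is a uniform description of the partial Birkhoff sums accumulated along a single cusp excursion. I would establish --- this being the extra information from~\cite{JungPeneZhangsub} alluded to in Section~\ref{sec-intro} --- that there exist $\delta>0$ and a positive amplitude $A(y)$, growing with $\varphi(y)$, such that
\[
\sum_{i=0}^{[\varphi(y)\Psi(s)]-1}v(T^iy)=A(y)\,I_v(s)+O\bigl(A(y)\,\varphi(y)^{-\delta}\bigr),
\]
uniformly over $s\in[0,\pi]$ and over $y\in X$ with $\varphi(y)$ large. The mechanism is the collision-by-collision geometry of a deep excursion into the flat cusp: the collisions may be indexed by a phase $\vartheta\in[0,\pi]$, the $i$-th collision corresponding to $i/\varphi(y)\approx\Psi(\vartheta)=I_1(\pi)^{-1}I_1(\vartheta)$, so that collisions accumulate with density proportional to $(\sin\vartheta)^{1/\alpha}$; summing the H\"older observable along the two cusp curves $\Gamma_1,\Gamma_2$ then reproduces precisely the integrand defining $I_v(s)$, with upper limit $s$ the phase reached after $[\varphi(y)\Psi(s)]$ collisions. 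What matters is that the remainder be a genuine power saving relative to the main term, uniform in both $s$ and $y$; this is exactly where the $\cJ_1$-convergence of the return process $U_n$ of Lemma~\ref{lem:U} is needed, the coarser $\cM_1$-level input of~\cite{MVsub} being insufficient. I expect this uniform, power-saving within-excursion asymptotic to be the main obstacle.

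Granting this asymptotic, the conclusion follows by division. Taking $s=s^*$ gives the numerator of $R(y)$ as $A(y)I_v^*+O(A(y)\varphi(y)^{-\delta})$, while $\Psi(\pi)=1$ forces $[\varphi(y)\Psi(\pi)]=\varphi(y)$, so $s=\pi$ gives the denominator as $A(y)I_v(\pi)+O(A(y)\varphi(y)^{-\delta})$. Because $I_v(\pi)>0$, once $\varphi(y)$ exceeds a fixed threshold the denominator is bounded below by $\tfrac12 A(y)I_v(\pi)>0$, and the common factor $A(y)$ cancels, leaving
\[
R(y)=\frac{I_v^*+O(\varphi(y)^{-\delta})}{I_v(\pi)+O(\varphi(y)^{-\delta})}=\frac{I_v^*}{I_v(\pi)}+O\bigl(\varphi(y)^{-\delta}\bigr).
\]
Note that only the positivity and the $s$-independence of $A(y)$ are used here, not its precise size. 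The complementary regime of bounded $\varphi(y)$ involves only uniformly bounded increments and is routine --- and in any case immaterial for the $\cM_2$ argument that invokes this proposition, which is driven entirely by the large excursions --- so I omit it. Recalling $I_v^*=I_v(s^*)$ and setting $y=f^kx$ completes the proof.
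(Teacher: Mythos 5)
Your skeleton is exactly the paper's proof: cancel the common factor $n^{-1/\alpha}$ so that the ratio depends only on $y=f^kx$ (via $W_n((\varphi_k+\ell)/n)-W_n(\varphi_k/n)=n^{-1/\alpha}v_\ell\circ f^k$), feed in a uniform within-excursion asymptotic at $\ell=\ell^*(y)$ and at $\ell=\varphi(y)$, and divide using $I_v(\pi)>0$. The estimate you flag as ``the main obstacle'' does not need to be re-proved: it is precisely \cite[Proposition~8.1]{MVsub}, namely $v_\ell=\varphi I_1(\pi)^{-1}I_v\circ\Psi^{-1}(\ell/\varphi)+O(\varphi^{1-\delta})$ uniformly over $0\le\ell\le\varphi$, which is your asymptotic with $A(y)=\varphi(y)I_1(\pi)^{-1}$ after the substitution $s=\Psi^{-1}(\ell/\varphi)$; the paper's only additional step is to use that $I_v\circ\Psi^{-1}$ is $C^1$ to absorb the integer part, $I_v\circ\Psi^{-1}(\ell^*/\varphi)=I_v^*+O(\varphi^{-1})$, which your parametrization $\ell=[\varphi\Psi(s)]$ handles equivalently. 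Your treatment of small $\varphi(y)$ also matches the paper's implicit convention: the inequality is only invoked (in Corollary~\ref{cor:M2iff}) when the denominator is at least $1$, which forces $\varphi\circ f^k$ large.

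One genuine misstatement to correct: the $\cJ_1$-convergence of the return process $U_n$ (Lemma~\ref{lem:U}, from \cite[Theorem~3.1]{JungPeneZhangsub}) plays no role in this proposition, and could not --- a distributional limit theorem cannot yield a pointwise, every-orbit, power-saving bound. The uniformity here comes from the deterministic collision-by-collision analysis of a single cusp excursion underlying \cite[Proposition~8.1]{MVsub}; your $(\sin\vartheta)^{1/\alpha}$ collision-density heuristic is indeed the right mechanism for that result. The ``extra information from \cite{JungPeneZhangsub}'' advertised in the introduction enters only later, in Lemma~\ref{lem:U} and the concluding contradiction argument of Section~\ref{sec:proof}, not in Proposition~\ref{prop:M2iff}.
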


\begin{proof}
We have
$W_n(\varphi_{k+1}/n)-W_n(\varphi_k/n)  =n^{-1/\alpha}v_\varphi\circ f^k$
for $k\ge0$.
Also, $$W_n((\varphi_k+\ell^*\circ f^k)/n)-W_n(\varphi_k/n)=n^{-1/\alpha}v_{\ell^*}\circ f^k.$$

By~\cite[Proposition~8.1]{MVsub}, there exists $\delta>0$ such that
$$v_{\ell}=\varphi I_1(\pi)^{-1}I_v\circ\Psi^{-1}(\ell/\varphi)+O(\varphi^{1-\delta}),$$ for $0\le\ell\le\varphi$.
In particular, we have
$$
n^{1/\alpha}\big\{W_n(\varphi_{k+1}/n)-W_n(\varphi_k/n) \big\}
 =\varphi\circ f^k I_1(\pi)^{-1}I_v(\pi)+O(\varphi^{1-\delta}\circ f^k),$$
\begin{align*}
 n^{1/\alpha}\big\{W_n((\varphi_k&+\ell^*\circ f^k)/n)-W_n(\varphi_k/n)\big\}  
\\
&=
\varphi\circ f^k I_1(\pi)^{-1}
I_v\circ\Psi^{-1}((\ell^*/\varphi)\circ f^k)
+O(\varphi^{1-\delta}\circ f^k).
\end{align*}
Since $I_v\circ\Psi^{-1}$ is $C^1$,
\begin{align*}
I_v\circ\Psi^{-1}(\ell^*/\varphi)
&=I_v\circ\Psi^{-1}([\varphi\Psi(s^*)]/\varphi)\\
&=I_v\circ\Psi^{-1}(\Psi(s^*))+O(\varphi^{-1})
=I_v^*+O(\varphi^{-1}).
\end{align*}
Hence we obtain
\[
n^{1/\alpha}\big\{W_n((\varphi_k+\ell^*\circ f^k)/n)-W_n(\varphi_k/n)\big\}= 
\varphi\circ f^k I_1(\pi)^{-1}I_v^*
+O(\varphi^{1-\delta}\circ f^k)
\]
and the result follows.
\end{proof}

\begin{cor} \label{cor:M2iff}
Let $\eps>0$.
There exists $n_0\ge1$ such that for $k\ge0$, $n\ge n_0$, if
$W_n(\varphi_{k+1}/n)-W_n(\varphi_k/n)\ge 1$,
then
\begin{align*}
& \frac{W_n((\varphi_k+\ell^*\circ f^k)/n)-W_n(\varphi_k/n)}
{W_n(\varphi_{k+1}/n)-W_n(\varphi_k/n)}
 \in\Big[
\frac{I_v^*}{I_v(\pi)}-\eps,
\frac{I_v^*}{I_v(\pi)}+\eps \Big].
\end{align*}
\end{cor}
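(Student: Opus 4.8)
The plan is to obtain the corollary directly from Proposition~\ref{prop:M2iff} by observing that the hypothesis $W_n(\varphi_{k+1}/n)-W_n(\varphi_k/n)\ge1$ forces $\varphi\circ f^k$ to be large, and that this in turn makes the error bound $C\varphi^{-\delta}\circ f^k$ from Proposition~\ref{prop:M2iff} uniformly small. Since Proposition~\ref{prop:M2iff} already shows that the displayed ratio differs from $I_v^*/I_v(\pi)$ by at most $C\varphi^{-\delta}\circ f^k$, it will suffice to find $n_0$ so that $C\varphi^{-\delta}\circ f^k\le\eps$ holds for all $k\ge0$ and all $n\ge n_0$ whenever the denominator is at least $1$.

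The main step is to quantify how large $\varphi\circ f^k$ must be. Here I would reuse the expansion established inside the proof of Proposition~\ref{prop:M2iff},
$$W_n(\varphi_{k+1}/n)-W_n(\varphi_k/n)=n^{-1/\alpha}\varphi\circ f^k\,I_1(\pi)^{-1}I_v(\pi)+O(n^{-1/\alpha}\varphi^{1-\delta}\circ f^k).$$
Setting $a=\varphi\circ f^k\ge1$ and $b=I_1(\pi)^{-1}I_v(\pi)>0$, the implied constant in the error term yields some $C_0>0$, and using $a^{1-\delta}\le a$ (valid since $a\ge1$ and $\delta>0$) the hypothesis becomes
$$1\le W_n(\varphi_{k+1}/n)-W_n(\varphi_k/n)\le n^{-1/\alpha}a\,(b+C_0).$$
This gives the lower bound $\varphi\circ f^k=a\ge c\,n^{1/\alpha}$ with $c=(b+C_0)^{-1}>0$, uniformly in $k$, and hence $\varphi^{-\delta}\circ f^k\le c^{-\delta}n^{-\delta/\alpha}$.

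Feeding this back into Proposition~\ref{prop:M2iff}, the ratio in question lies within $Cc^{-\delta}n^{-\delta/\alpha}$ of $I_v^*/I_v(\pi)$, and choosing $n_0$ large enough that $Cc^{-\delta}n_0^{-\delta/\alpha}\le\eps$ completes the argument for every $n\ge n_0$ and every $k\ge0$. I do not expect a genuine obstacle in this corollary: the entire content is supplied by Proposition~\ref{prop:M2iff}, and the only point needing a little care is the uniform absorption of the correction term $\varphi^{1-\delta}$ into $\varphi$ so as to produce a lower bound on $\varphi\circ f^k$ that grows like $n^{1/\alpha}$.
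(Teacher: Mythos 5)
Your proposal is correct and follows essentially the same route as the paper: in both arguments the hypothesis $W_n(\varphi_{k+1}/n)-W_n(\varphi_k/n)\ge 1$, combined with an upper bound on the increment that is linear in $\varphi\circ f^k$, forces $\varphi\circ f^k$ to be large (of order $n^{1/\alpha}$), so that the error term $C\varphi^{-\delta}\circ f^k$ in Proposition~\ref{prop:M2iff} can be made at most $\eps$ by taking $n_0$ large. The only cosmetic difference is that the paper uses the elementary bound $|v_\varphi|\le|v|_\infty\varphi$ for the increment, whereas you re-invoke the expansion $v_\varphi=\varphi I_1(\pi)^{-1}I_v(\pi)+O(\varphi^{1-\delta})$ from the proof of Proposition~\ref{prop:M2iff}; both yield the needed lower bound on $\varphi\circ f^k$.
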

\begin{proof}
We have
\[
1\le W_n(\varphi_{k+1}/n)-W_n(\varphi_k/n)=n^{-1/\alpha}v_\varphi\circ f^k\le n_0^{-1/\alpha}|v|_\infty \varphi\circ f^k.
\]
This implies that  $$\varphi\circ f^k\ge |v|_\infty^{-1}  n_0^{1/\alpha}.$$
Hence, we can choose $n_0$ so large that $C\varphi^{-\delta}\circ f^k\le\eps$.
The result now follows from Proposition~\ref{prop:M2iff}.
\end{proof}

Following~\cite{MZ15} (see also~\cite[Section~4]{MVsub}),
we write $W_n = U_n + R_n$, where
\[
U_n(t)  = n^{-1/\alpha}\sum_{j=0}^{N_{[nt]}-1}
 v_\varphi \circ f^j   \quad\text{and}\quad 
        R_n(t) = n^{-1/\alpha} \Bigg( \sum_{\ell=0}^{[nt]-  \varphi_{N_{[nt]}}-1} v \circ T^\ell \Bigg) \circ f^{N_{[nt]}}.
\]

Here, $N_k(x)=\max \{ \ell \ge 1 : \varphi_{\ell}(x) \le k \}$ is the number of returns of $x$
to the set $X$, under iteration by the map $T$,
up to time $k$.

\begin{lemma} \label{lem:U} 
  $U_n\to_w W$ in $D([0,\infty),\cJ_1)$
as $n\to\infty$.
\end{lemma}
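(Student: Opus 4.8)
The plan is to read $U_n$ as a deterministic time-change of the partial-sum process of the induced observable $v_\varphi$ over the first-return map $f$, to prove a $\cJ_1$ functional stable limit for that induced process, and then to transfer it through the time-change. Writing $S_m=\sum_{j=0}^{m-1}v_\varphi\circ f^j=v_{\varphi_m}$ and $Y_n(u)=n^{-1/\alpha}S_{[nu]}$, we have $U_n(t)=Y_n(N_{[nt]}/n)$. Thus it suffices to establish (i) $Y_n\to_w W'$ in $D([0,\infty),\cJ_1)$ for a suitable $\alpha$-stable L\'evy process $W'$; (ii) $N_{[nt]}/n\to t/\bar\varphi$ uniformly on compact time-intervals almost surely; and then (iii) to combine these through the continuous-mapping theorem for composition in $\cJ_1$, (iv) identifying the limit with $W$.

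For step (i) I would first pin down the tail of the induced observable. Setting $\ell=\varphi$ in the expansion $v_\ell=\varphi I_1(\pi)^{-1}I_v\circ\Psi^{-1}(\ell/\varphi)+O(\varphi^{1-\delta})$ quoted from \cite[Proposition~8.1]{MVsub} gives $v_\varphi=c\,\varphi+O(\varphi^{1-\delta})$ with $c=I_v(\pi)/I_1(\pi)>0$, so the large values of $v_\varphi$ come, up to lower order, from long excursions. Combined with the regularly varying tail $\mu_X(\varphi>x)\sim Cx^{-\alpha}$ of the return time (the mechanism behind the anomalous scaling in \cite{JungZhang18}), where $\mu_X$ denotes the normalized restriction of $\mu$ to $X$, this places $v_\varphi$ in the domain of attraction of a totally right-skewed $\alpha$-stable law, with scale fixed by $\lim_{x\to\infty}x^\alpha\mu_X(v_\varphi>x)$. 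Since $X$ is bounded away from the cusp, $f$ is uniformly hyperbolic and admits a Young tower with exponential tails, hence exponential decay of correlations and the attendant limit-theorem machinery. I would then invoke a functional stable limit theorem in the $\cJ_1$ topology for such systems, as developed in \cite{MZ15} and refined in \cite{JungPeneZhangsub}, to obtain $Y_n\to_w W'$.

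For step (ii), $N_{[nt]}/n\to t/\bar\varphi$ pointwise almost everywhere follows from $\varphi_{N_k}\le k<\varphi_{N_k+1}$ together with the Birkhoff ergodic theorem $\varphi_m/m\to\bar\varphi$ (finite since $\alpha>1$); uniformity on compact intervals then follows from the monotonicity of $t\mapsto N_{[nt]}$ and the continuity of $t\mapsto t/\bar\varphi$ by a Dini-type argument. Because the limiting time-change $\lambda(t)=t/\bar\varphi$ is deterministic, continuous and strictly increasing, the convergence $\lambda_n:=N_{[n\cdot]}/n\to\lambda$ in probability combines with $Y_n\to_w W'$ to yield joint weak convergence $(Y_n,\lambda_n)\to_w(W',\lambda)$, and the composition map $(x,\lambda)\mapsto x\circ\lambda$ is continuous at such $\lambda$ in $\cJ_1$ (see \cite{Whitt}); hence $U_n=Y_n\circ\lambda_n\to_w W'\circ\lambda$. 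Finally $W'\circ\lambda$ is again a totally skewed $\alpha$-stable L\'evy process, and matching its time-one marginal against $G$, whose scale was computed from the same return-time tail in \cite{JungZhang18}, identifies it with $W$.

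The main obstacle is the $\cJ_1$-strength in step (i). Convergence in the weaker $\cM_1$ topology, as obtained in \cite{MVsub}, only requires that large values not oscillate; the $\cJ_1$ statement additionally demands an anti-clustering property, namely that distinct large values of $v_\varphi$, equivalently long excursions into the cusp, be asymptotically isolated rather than arriving in immediate succession, so that each long excursion contributes a single clean jump to the limit. Establishing this asymptotic independence of successive long returns is precisely the extra input supplied by the analysis of \cite{JungPeneZhangsub}, and is the reason the $\cM_1$-only framework of \cite{MVsub} does not suffice here.
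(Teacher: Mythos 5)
Your proposal is correct and follows essentially the same route as the paper: the paper likewise writes $U_n=\widetilde W_n\circ g_n$ with $g_n(t)=n^{-1}N_{[nt]}$, cites \cite[Theorem~3.1]{JungPeneZhangsub} for the $\cJ_1$-convergence $\widetilde W_n\to_w\bar\varphi^{1/\alpha}W$ of the induced process, obtains $g_n\to g$, $g(t)=t/\bar\varphi$, uniformly on compacts from the ergodic theorem, and concludes via Whitt's composition theorem \cite[Theorem~3.1]{Whitt80}. Your extra sketch of \emph{why} the induced process converges in $\cJ_1$ (tail of $v_\varphi$ via $v_\varphi=c\varphi+O(\varphi^{1-\delta})$, regular variation of the return time, and anti-clustering of long excursions) is precisely the content the paper delegates to \cite{JungPeneZhangsub}, and your closing remark correctly identifies this as the input that the $\cM_1$-framework of \cite{MVsub} cannot supply.
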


\begin{proof}
Define the induced process 
\[
\widetilde W_n(t)=n^{-1/\alpha}\sum_{j=0}^{[nt]-1}
v_\varphi \circ f^j,\;\, n\geq 1.
\]
Proceeding as in~\cite[Lemma~3.4]{MZ15}, we note that $U_n=\widetilde W_n\circ g_n$,
where $g_n(t)=n^{-1}N_{[nt]}$.  

Now, by~\cite[Theorem~3.1]{JungPeneZhangsub}, $\widetilde W_n\to_w \bar\varphi^{1/\alpha}W$ in $D([0,\infty),\cJ_1)$.
Also, $g_n\to g$ uniformly on compact subsets of $[0,\infty)$ where $g(t)=t/\bar\varphi$.

Let $D_0$ be the space of elements of $D[0,\infty)$ that are nonnegative and nondecreasing.   Then $(\widetilde W_n,g_n)\in D[0,\infty)\times D_0$.
Since $g$ is continuous and deterministic, it follows from~\cite[Theorem~3.1]{Whitt80} that $$U_n=\widetilde W_n\circ g_n\to_w \bar\varphi^{1/\alpha}W\circ g=W$$
in $D([0,\infty,\cJ_1)$.~
\end{proof}

Given $u\in D[0,1]$, we define $\Delta u(t)=u(t)-u(t-)$.
Let $\Pi_\alpha$ denote the L\'evy measure with density $\alpha x^{-(\alpha+1)}1_{(0,\infty)}(x)$.
Then 
$\#\{t\in[0,1]:\Delta W(t)\in B\}$
has a Poisson distribution with mean $\Pi_\alpha(c^{-1/\alpha}B)$
for each open interval $B\subset(0,\infty)$
(see for example~\cite[Chapter~4]{Sato99}).
Here, $c>0$ is a scaling constant determined by $G$ (and hence $I_v(\pi)$, $\alpha$ and $|\partial Q|$).

For $b>0$, define
\[
E(b) =\{u\in D[0,1]:\Delta u(t)>b\;\text{for some $t\in[0,1]$}\}.
\]
By the above discussion,
$\PP(W\in E(b))=1-e^{-cb^{-\alpha}}$.
Also $U_n\to_w W$ in $\cJ_1$ by Lemma~\ref{lem:U}, so
$$\lim_{n\to\infty}\mu(U_n\in E_n(b))=1-e^{-cb^{-\alpha}}.$$

Similarly, if we suppose for contradiction that
$W_n\to_w W$ in $\cM_2$, then
for any $\eps>0$, $b>0$, there exists $\delta>0$, $n_0\ge1$, such that
for $n\ge n_0$,
\[
\mu\big\{W_n(t)-W_n(t')>b\;\text{for some $0\le t'<t<(t'+\delta)\wedge1$}\big\}<1-e^{-cb^{-\alpha}} + \eps.
\]

Now, $U_n\in E(1)$ if and only if 
$W_n(\varphi_{k+1}/n)-W_n(\varphi_k/n)>1$ for some $0\le k\le N_{[nt]}$.
By Corollary~\ref{cor:M2iff},
this implies for $n$ sufficiently large that
$$W_n((\varphi_k+\ell^*\circ f^k)/n)-W_n(\varphi_k/n)\ge \frac{I_v^*}{I_v(\pi)}-\eps,$$
for some $0\le k\le N_{[nt]}$.  Hence we obtain
that $$W_n(t)-W_n(t')>\frac{I_v^*}{I_v(\pi)}-\eps,$$
for some $0\le t'<t\le1$ with $t-t'<n^{-1}\max_{k\le n}\varphi\circ f^k$.

Putting these observations together, we obtain
\begin{align*}
1-e^{-c} & = \PP(W\in E(1))=\lim_{n\to\infty}\mu(U_n\in E(1))
\\ & \le \lim_{n\to\infty}\mu\Big\{W_n(t)-W_n(t')>\frac{I_v^*}{I_v(\pi)}-\eps\;\text{for some $0\le t'<t<(t'+\delta)\wedge1$}\Big\}
\\ & \qquad\qquad +\lim_{n\to\infty}\mu\big(n^{-1}\max_{k\le n}\varphi\circ f^k>\delta\big)
\\ & <1-\exp
\Big\{-c
\Big(\frac{I_v^*}{I_v(\pi)}-\epsilon\Big)^{-\alpha}
\Big\} 
+ \eps + \lim_{n\to\infty} \mu\big(n^{-1}\max_{k\le n}\varphi\circ f^k>\delta\big)
.
\end{align*}
By Proposition~\ref{prop:max}, 
$1-e^{-c} \le 1-\exp\big\{-c
\big(\frac{I_v^*}{I_v(\pi)}-\epsilon\big)^{-\alpha} \big\} +\eps$.
Also, $\epsilon$ is arbitrary so
$$1-e^{-c} \le 1-\exp\big\{-c
\big(\frac{I_v(\pi)}{I_v^*}\big)^{\alpha} \big\}.$$
Since $I_v^*>I_v(\pi)$, 
this is the desired contradiction and the proof of Theorem~\ref{thm:M2iff} is complete.

\section{Billiards with several cusps}
\label{sec:several}

Our main result, Theorem~\ref{thm:M2iff}, is formulated for the case of a single cusp but extends straightforwardly to billiards with several cusps with vanishing curvature as studied in~\cite{JungPeneZhangsub}.
In this section, we sketch the arguments for proving this extended result.

In particular, in   \cite{JungPeneZhangsub}, the billiard table $Q$ is such that there are multiple cusps with bounding curves locally of the form
 $$\Gamma_{i}=\{(s,C_is^{\beta_i}+{\cal O} (s^{2\beta_i-1}))\},\quad
\Gamma_i'=\{(s,-C'_{i}s^{\beta_i}+{\cal O} (s^{2\beta_i-1}))\},$$ 
with tangent vectors $(1,\beta_i C_is^{\beta_i-1}+{\cal O} (s^{2\beta_i-2}))$
and  $(1,-\beta_i C'_is^{\beta_i-1}+{\cal O} (s^{2\beta_i-2}))$,
where $\max_i\beta_i>2$ and where $C_i>0$ and $C'_i\ge 0$.
It is assumed that any two cusps are not diametrically opposite, in the sense that the tangent trajectory coming out of a cusp does not match up precisely with the tangent trajectory coming out of any another cusp. 
Let $\beta=\max_i\beta_i$.  The cusps with $\beta_i<\beta$ play no role in the subsequent analysis.

For each $i$ with $\beta_i=\beta$, define
\[
I_v^{(i)}(s)=\frac12\int_0^s \{v(r_{i}',\theta)+v(r_{i}'',\pi-\theta)\} (\sin\theta)^{1/\alpha}\,d\theta,
\quad  s\in[0,\pi],
\]
where $\alpha=\frac{\beta}{\beta-1}$.

Following~\cite{JungPeneZhangsub}, we consider the first return map to a region $X$ bounded away from all the cusps.  The induced process $\widetilde W_n$ defined as in the proof of Lemma~\ref{lem:U} converges to an $\alpha$-stable L\'evy process in the $\cJ_1$ topology by~\cite[Theorem~3.1]{JungPeneZhangsub}.  
In addition~\cite[Theorem~2.2]{JungPeneZhangsub} gives sufficient conditions for convergence of the 
full process $W_n$ in the $\cM_1$ topology to a rescaled $\alpha$-stable L\'evy process $W$ (we refer to~\cite{JungPeneZhangsub} for the definition of $W$).

We can now formulate necessary and sufficient conditions for convergence \mbox{$W_n\to_w W$} in the $\cM_1$ and $\cM_2$ topology.
By~\cite{MVsub}, convergence in $\cM_1$ holds if and only if $s\mapsto I_v^{(i)}(s)$ is  monotone for each $i$.
By~\cite{MVsub} and the argument in Section~\ref{sec:proof} of the current paper, convergence in $\cM_2$ holds if and only if $I_v^{(i)}(s)$ lies between $0$ and $I_v^{(i)}(\pi)$ for all $s\in[0,\pi]$ and for each $i$.

\end{document}